\documentclass[11pt]{article}
\usepackage{graphicx} 
\usepackage{latexsym}
\usepackage{todonotes}
\usepackage{soul} 
\usepackage{amssymb, enumerate}
\usepackage{amsthm} 
\usepackage[leqno]{amsmath} 
\usepackage[right,mathlines]{lineno} 
\usepackage{hyperref}
\usepackage[toc,page]{appendix} 

\usepackage[
backend=biber,
bibstyle=ieee,
citestyle=numeric-comp,
sorting=nyt,
isbn=false
]{biblatex}

\NewBibliographyString{arxiv}
\DefineBibliographyStrings{english}{%
  arxiv = {arXiv}
}

\DeclareSourcemap{
  \maps{
    \map{
    \step[fieldsource=arxivId,
            fieldset=eprint,
            origfieldval]
      \step[fieldsource=arxivId,
            fieldset=eprinttype,
            fieldvalue={arXiv}] 
        \step[fieldset=journal,
        fieldvalue={CoRR}]
    }
  }
}

\renewcommand{\epsilon}{\varepsilon}

\DeclareMathOperator{\vcdim}{\textnormal{vcdim}}
\DeclareMathOperator{\tww}{\textnormal{tww}}

\newtheorem{Theorem}{Theorem}[section]

\newtheorem{Lemma}[Theorem]{Lemma}

\newtheorem{Claim}[Theorem]{Claim}
\newtheorem{Definition}{Definition}[section]

\def\inst#1{$^{#1}$}

\makeatletter
\newcommand{\leqnomode}{\tagsleft@true}
\newcommand{\reqnomode}{\tagsleft@false}
\makeatother

\begin{document}
\fontsize{12}{17}\selectfont

\title{The Twin-Width of Graphs of Bounded VC-Dimension}

\author{Therese Biedl \inst{1}\thanks{We acknowledge the support of the Natural Sciences and Engineering Research Council of Canada (NSERC), [funding reference numbers RGPIN-2020-03912 and RGPIN-2020-03958]. Cette recherche a \'et\'e financ\'ee par le Conseil de recherches en sciences naturelles et en g\'enie du Canada (CRSNG), [num\'eros de r\'ef\'erence RGPIN-2020-03912 et RGPIN-2020-03958]. This project was funded in part by the Government of Ontario. This research was conducted while Spirkl was an Alfred P. Sloan Fellow. This research was undertaken, in part, thanks to funding from the Canada Research Chairs Program.}
\and Taite LaGrange \inst{1}
\and Sophie Spirkl \inst{2}\footnotemark[1]}

\date{\today}
\maketitle

\begin{center}
{\footnotesize
\inst{1} David R. Cheriton School of Computer Science, University of Waterloo, Waterloo, ON, Canada \\
\inst{2} Department of Combinatorics, University of Waterloo, Waterloo, ON, Canada }\\
\end{center}

\begin{abstract}
    In this paper, we investigate which hereditary classes of graphs admit sub-linear (in the number of vertices) bounds on twin-width. By modifying conference graphs, we can show that split, bipartite, and co-bipartite graphs can all have linear twin-width. However, excluding an induced subgraph of each of these three types is equivalent to the class of graphs having bounded VC-dimension, as shown by Bousquet, Lagoutte, Li, Parreau and Thomass\'e. Graphs of bounded VC-dimension can have unbounded twin-width, but whether it can be linear was an open question. In this paper, we first present a tool for obtaining twin-width bounds in general by contracting a graph based on a partition by distinct neighbourhoods. Then, using this tool, we prove that graphs with bounded VC-dimension have twin-width at most sub-linear. We also obtain a separate, tighter upper bound for the class of interval graphs, as well as a lower bound. 
\end{abstract}


\section{Introduction}

For many graph theory problems, there are some classes of graphs in which it is easy to obtain an optimum solution. As such, there are many situations in which we may want to know how far a graph is from one of these `easy' classes, and hence there are numerous `width' parameters to define this distance. Some well-established parameters include tree-width, path-width, and clique-width, each a measure of how structurally distant a graph is from being a tree, path, or clique, respectively.

The usefulness of width parameters cannot be understated, as they allow us to describe the structure of a graph, characterize its complexity with respect to computational problems, and choose or design suitable algorithms to run on it (see, for example, \cite{wd-param}). Choosing the right parameter depends on the problem and all parameters have limitations. Tree-width, for example, is bounded in a smaller range of classes, requiring the graph to be sparse, but can yield more efficient algorithms while clique-width (or, similarly, rank-width) is more general, covering dense families of graphs on top of all those with bounded tree-width.

Recently, twin-width has emerged as being in some sense among the most general of the width parameters. It was introduced by Bonnet, Kim, Thomass\'e, and Watrigant in \cite{tww1} as a generalization of an earlier permutations parameter from Guillemot and Marx \cite{perm}. 
Twin-width is more powerful than both tree-width and clique-width (see Figure 2a in \cite{wd-comp} for a detailed picture of the comparability of common width parameters) since all graphs with bounded tree-width or clique-width also have bounded twin-width. Unlike tree-width and clique-width which are best suited to sparse or dense graphs respectively, graphs of bounded twin-width seem to ``stop at the right place" between the two realms in terms of balancing efficiency with generality \cite{tww2}, so twin-width is useful for a broader range of graphs.

Informally, twin-width measures how structurally different a family of graphs is from the class of cographs (see Section~\ref{sec:prelim} for a formal definition). Cographs, the class of graphs which can be built from a single-vertex graph by repeatedly taking complements and disjoint unions, have nice computational properties and naturally arise in many areas, including recognition algorithms, parallel computing, and biology \cite{cographs}. Quantifying the structural distance between a graph and this class is therefore both practical and natural. However, being a relatively new parameter, much is still unknown about its behaviour, especially in large hereditary classes.

Twin-width is particularly well-suited to problems related to induced subgraphs of graphs. For one, cographs are exactly the class of $P_4$-free graphs and are one of the more significant classes of hereditary graphs \cite{cographs2}. Twin-width is also closed under taking induced subgraphs and complements \cite{tww1}, which makes it a natural candidate for the study of hereditary graph classes in general. Bonnet, Geniet, Kim, Thomass\'e, and Watrigant initially conjectured that every small hereditary class has bounded twin-width \cite{tww2}. This was later disproven in general \cite{tww7}, but is true of small hereditary classes of ordered graphs \cite{tww4}. 

Bounded twin-width is closed under taking FO interpretations and transductions \cite{tww1}. The original algorithmic application of twin-width is in first-order (FO) model-checking, which is FPT-solvable when twin-width is bounded \cite{tww1}. In hereditary classes of ordered graphs, FO-model-checking is FPT-solvable if and only if the class has bounded twin-width \cite{tww4, ord-tww}. In fact, many algorithmic and structural questions have nice answers in classes of graphs with bounded twin-width, including enumeration \cite{tww2}, colouring \cite{tww3, Drei21} and $\chi$-boundedness \cite{Bour23}, and finding minimum or partial dominating sets \cite{tww3, dom-tww}. As mentioned, bounded twin-width encompasses many important graph classes, including graphs with bounded tree-width, bounded clique-width or rank-width, planar graphs, proper minor-closed graph classes, and proper subclasses of permutation graphs \cite{tww1}. 

However, there are limitations to considering only families of graphs with bounded twin-width, as we will see.
While graphs of twin-width zero or one can be recognized in linear time \cite{Ahn25}, little is known even about graphs of twin-width two or three, and deciding if a graph has twin-width four or five is already NP-complete \cite{Berg21}. Most notable to the work presented here, many natural classes of graphs do not have bounded twin-width at all. In fact, some graphs can have linear twin-width relative to the number of vertices, as evidenced by taking an Erd\"{o}s-R\'enyi random graph or a conference graph \cite{rand-tww}, for example. Extending the latter, one can similarly show that each of the three classes of split, bipartite, and co-bipartite graphs have linear twin-width (see Appendix~\ref{appendix} for proof).

This raises a natural question of the relationship between twin-width and Vapnik-Chervonenkis (VC) dimension \cite{vcdim}, a parameter measuring the complexity of vertex neighbourhoods (see Section~\ref{sec:prelim} for a formal definition). A hereditary class of graphs has bounded VC-dimension if and only if it excludes a bipartite graph, a co-bipartite graph, and a split graph \cite{hered-vc, vcdim2}. Since all three classes of bipartite, co-bipartite, and split graphs can have linear twin-width (in the number of vertices), we know that a hereditary class of unbounded VC-dimension contains arbitrarily large graphs of linear twin-width. 

In the opposite direction, Bonnet, Foucaud, Lehtil\"a, and Parreau \cite{bdtww-vc} showed that bounded twin-width implies bounded VC-dimension by arguing about neighbourhood complexity and restrictions on the number of distinct neighbourhoods between fixed-size sets of vertices. For an $n$-vertex graph with twin-width at most $d$, they obtain an essentially tight upper bound of $2^{d+\log d+\Theta(1)}k$ on the VC-dimension.

However, it is known that graphs with bounded VC-dimension can have unbounded twin-width in general: Every bounded twin-width class $\mathcal{C}$ is small in the sense that $\mathcal{C}$ does not contain more than $n!c^n$ graphs with $n$ vertices, for some constant $c$ \cite{tww2}. The class of graphs with VC-dimension at most two contains the class of interval graphs \cite{hered-vc} and these already have unbounded twin-width by the counting argument on the size of the class \cite{tww2}. However, this argument provides little information with respect to precisely how large the twin-width can be for either of these classes. Can it be linear or can we do better? This question is the focus of the current work.

We provide separate answers for both bounded VC-dimension graphs and interval graphs, despite the former containing the latter. We do this for two reasons. First, we can obtain better bounds for interval graphs than for general graphs with bounded VC-dimension, including establishing a lower bound alongside the upper bound. Second, we note the importance of interval graphs in their own right \cite{interval}, justifying this distinction.

Interval graphs have unbounded twin-width but some subclasses, including unit interval graphs, do have bounded twin-width. This was one of the arguments in \cite{tww2} for twin-width seeming to strike the balance between sparse and dense classes.
Moreover, interval graphs are delineated by twin-width \cite{tww8}. Despite these notable aspects of the class and their consistent use as a prime example of graphs with unbounded twin-width, to the best of our knowledge no efforts have been made to sufficiently characterize the behaviour of their twin-width. As such, we devote some attention to obtaining better bounds for this class that improve upon those obtained for general graphs with bounded VC-dimension.

\subsection{Main Results}

We obtain four main results: a general tool for obtaining twin-width bounds, a sub-linear upper bound on the twin-width of graphs with bounded VC-dimension, and both a sub-linear upper bound and a super-logarithmic lower bound on the twin-width of interval graphs.

In Section~\ref{sec:tww}, we first establish a tool that takes as input a graph that is vertex-partitioned into a bounded number of parts, each of which have high similarity between their neighbourhoods, and outputs a contraction sequence witnessing sub-linear twin-width dependent on both the number of parts and the symmetric difference between their neighbourhoods. This is the technique we will use to obtain upper bounds on the twin-width.

Then, in Section~\ref{sec:main}, we find a suitable partition for graphs with bounded VC-dimension to use as input to the aforementioned tool. Feeding this partition into the twin-width tool we designed, we obtain our first result.

\noindent
{\bf Theorem~\ref{thm:vc-tww}}
{\it 	The class of n-vertex graphs with VC-dimension at most $k$ has twin-width in $O(n^{1-\frac{1}{2k+1}})$.}

In particular, Theorem~\ref{thm:vc-tww} implies that a hereditary class $\mathcal{C}$ of graphs admits a sub-linear bound ($o(n)$ in terms of the number of vertices $n$) on twin-width if and only if $\mathcal{C}$ has bounded VC-dimension. The first direction comes from the above, while the second comes from the fact that split, bipartite, and cobipartite graphs are all hereditary classes with linear twin-width (see Appendix~\ref{app:bipartite} for proof) and any class with unbounded VC-dimension contains at least one of these classes, as previously mentioned.

In Section~\ref{sec:interval}, we expand on our previous results to give explicit bounds on the twin-width of interval graphs. We find a partition distinct from that for general graphs with bounded VC-dimension which improves the upper bound on the twin-width. We obtain our second result.

\noindent
{\bf Theorem~\ref{thm:tww-interval-ub}}
{\it 	Any $n$-vertex interval graph has twin-width at most $2\lceil\sqrt{2n}\rceil \in O(\sqrt{n})$.}

We then give a construction for graphs that belong to the intersection of the class of interval graphs and the class of split graphs (graphs whose vertex-set can be partitioned into a clique and a stable set). This construction gives a lower bound on the twin-width of interval graphs.

\noindent
{\bf Theorem~\ref{thm:tww-interval-lb}}
{\it 	The class of $n$-vertex interval graphs has twin-width in $\Omega(n^{1/4})$.}

\section{Preliminaries}\label{sec:prelim}

In this paper, all logarithms are taken base $e$ unless otherwise stated and all graphs are simple and undirected.

\subsection{Graph Theory}
For a graph $G$, let $V(G)$ denote its vertex-set, $E(G)$ its edge-set, and $uv \in E(G)$ the edge between vertices $u,v \in V(G)$. We denote by $N(v)$ the neighbourhood of a vertex $v$ and by $\Delta(G)$ the maximum degree of a vertex in $G$. For a subset of vertices $X \subseteq V(G)$, we denote by $G[X]$ the subgraph induced by $X$ in $G$ and let $|X|$ be the cardinality of $X$. A \emph{stable set} is a set $X$ of vertices where $G[X]$ has no edges.
The following lemma is well-known, for example, as a consequence of Brooks' Theorem \cite{brooks}.

\begin{Lemma}\label{lm:large-IS}
    For any $n$-vertex graph $G$, there exists a stable set of size at least $\frac{n}{1+\Delta(G)}$.
\end{Lemma}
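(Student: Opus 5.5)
The plan is to use a greedy argument, which avoids Brooks' Theorem altogether. If $n=0$ the statement is trivial, so assume $n\ge 1$. We build a stable set $S$ as follows. Start with $S=\emptyset$ and $R=V(G)$. While $R\neq\emptyset$, pick any vertex $v\in R$, add $v$ to $S$, and delete $v$ together with $N(v)\cap R$ from $R$.

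The first step is to check that $S$ is stable. When a vertex $v$ is added to $S$, all of its neighbours still in $R$ are removed at that moment. Any vertex added to $S$ later is taken from $R$, so it cannot be adjacent to $v$. Hence $G[S]$ has no edges.

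The second step is to count. Each iteration removes at most $1+|N(v)|\le 1+\Delta(G)$ vertices from $R$, and the loop ends only when all $n$ vertices have been removed. So the number of iterations, which equals $|S|$, satisfies $|S|(1+\Delta(G))\ge n$. This gives $|S|\ge \frac{n}{1+\Delta(G)}$.

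None of these steps is difficult; the only points needing care are the stability invariant and the fact that the loop runs until $R$ is empty. As an alternative, we could take a proper colouring with $1+\Delta(G)$ colours, which the same greedy idea provides, and apply pigeonhole: some colour class is a stable set of size at least $n/(1+\Delta(G))$. This is the route suggested by the reference to Brooks' Theorem, although the greedy bound $\Delta+1$ already suffices.
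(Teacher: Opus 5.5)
Your greedy argument is correct and complete. The stability invariant holds because $R$ only shrinks, and every neighbour of a chosen vertex still in $R$ is removed when that vertex is chosen. The count $|S|(1+\Delta(G))\ge n$ is also right.

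The paper gives no proof. It cites the lemma as a consequence of Brooks' Theorem, which is the colouring-plus-pigeonhole route you mention at the end: $\chi(G)\le\Delta(G)+1$, so some colour class is a stable set with at least $n/(1+\Delta(G))$ vertices.

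Your direct greedy stable set is more elementary, self-contained and constructive (linear time). Brooks is more than this statement needs. It actually gives $\chi(G)\le\Delta(G)$ unless some component is $K_{\Delta(G)+1}$ or, when $\Delta(G)=2$, an odd cycle. That would yield the slightly stronger bound $n/\Delta(G)$ in the remaining cases. The lemma only asks for the $\Delta+1$ bound, which is tight for disjoint unions of $K_{\Delta+1}$. It is also exactly what is used in Case~2 of Lemma~\ref{lm:vc-tww-2}, where $1+\Delta(H)\le n^{\delta}$ gives $|B|\ge n^{1-\delta}$. So the trivial greedy bound suffices, whether as your greedy stable set or as greedy colouring plus pigeonhole.
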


Twin-width will require working with the differences in vertex neighbourhoods, so we define some related basic language here. For two vertices $u,v \in V(G)$, let $N(u) \Delta N(v)$ denote the \emph{symmetric difference} $(N(u) \cup N(v)) \setminus (N(u) \cap N(v))$ between their neighbourhoods.
Consider a vertex-set $S$ and a vertex $v$ (possibly in $S$).   We call $v$ \emph{complete to $S$} if it is adjacent to all vertices of $S$, \emph{anticomplete to $S$} if it is adjacent to none of $S$, and \emph{mixed on $S$} otherwise (it has both a neighbour and a non-neighbour, possibly $v$ itself, in $S$).

\subsection{Twin-Width}

We use the following standard definitions from \cite{tww1}.

A \emph{trigraph} is a graph $G$ along with a partition $(B,R)$ of its edge-set; we usually refer to edges in $B$ as \emph{black} edges and edges in $R$ as \emph{red} edges. We think of red edges as recording `errors' that will arise from `merging' edges and non-edges. A \emph{(vertex) contraction} is an identification of two vertices $u,v$ into a single vertex $w$ where we keep previously black or red edges from $w$ to $N(u) \cap N(v)$ and add red edges from $w$ to $N(u) \Delta N(v)$ (or turn previously black edges red, avoiding multi-edges). In a trigraph, the \emph{red degree} of a vertex $v$ is the number of red edges incident with $v$. 

A \emph{contraction sequence} of an $n$-vertex graph $G$ is a sequence of trigraphs $G = G_n,\dots,G_1 = K_1$ such that $G_t$ is obtained from $G_{t+1}$ by performing a single vertex contraction. We call such a sequence a \emph{d-sequence} if all trigraphs in the sequence have maximum red degree at most $d$. We may also say that a $d$-sequence is a contraction sequence with \emph{width} $d$. The \emph{twin-width} of a graph $G$, denoted $\tww(G)$, is the minimum integer $d$ such that $G$ has a $d$-sequence. 

For a trigraph $G_t$ in a contraction sequence, let $\mathcal{P}_t$ denote the set of \emph{parts} of $G_t$, where a part is a maximal set of vertices that have been contracted to one vertex in $G_t$. For example, if the vertices $u,v \in G_{t+1}$ are contracted together in $G_t$ and neither had previously been contracted with any other vertex, then they will appear in the same part $\{u,v\} \in \mathcal{P}_t$. Hence the trigraph $G_t$ has one vertex for each set in $\mathcal{P}_t$. We refer to parts of size exactly one as \emph{singletons} or \emph{uncontracted vertices}: they have not yet been involved in a contraction with any other vertex. All other vertices of a trigraph $G_t$ are called \emph{contracted vertices}: they represent two or more vertices of $G$.

\subsection{VC-Dimension}\label{sec:prelim-vc}

Originally defined for set systems, VC-dimension essentially characterizes how complex a system is.

\begin{Definition}[Set VC-Dimension \cite{vcdim}]
    For a set $\mathcal{F}$ of subsets of a finite \emph{ground set} $V$, a subset $S$ of $V$ is shattered by $\mathcal{F}$ if for every $A \subseteq S$ there exists $B \in \mathcal{F}$ with $B \cap S = A$. The VC-dimension of $\mathcal{F}$ is the largest cardinality of a subset of $V$ that is shattered by $\mathcal{F}$.
\end{Definition}

Extended to graphs, the VC-dimension of a graph characterizes the complexity of its sets of vertex neighbourhoods.

\begin{Definition}[Graph VC-Dimension \cite{vcdim2}]
    The VC-dimension of a graph $G$, denoted $\vcdim(G)$, is the VC-dimension of the set $\{N(v):v \in V(G)\}$ of neighbourhoods of $V(G)$.
\end{Definition}

Computing the VC-dimension is LOGNP-Complete \cite{comp-vc1, comp-vc2}. Therefore, in this work, we assume that it is given as input along with the graph. We will need a result due to Haussler \cite{same-vc-og}.

\begin{Lemma}[Theorem 1 in \cite{same-vc-og}]\label{lm:haussler}
    Let $\mathcal{F}$ be a set system on an N-element ground set. If the VC-dimension of $\mathcal{F}$ is at most $k$ and $\epsilon = c/N$ for an integer $c$, $1 \leq c \leq N$, then \[\mathcal{M}(\epsilon,\mathcal{F}) \leq e(k+1)\left(\frac{2e}{\epsilon}\right)^k.\]
    Here, $\mathcal{M}(\varepsilon,\mathcal{F})$ denotes the cardinality of the largest subset $\mathcal{F}'$ of $\mathcal{F}$ such that for any $X,Y\in \mathcal{F}'$ we have $|X\Delta Y|\geq \varepsilon N$.
\end{Lemma}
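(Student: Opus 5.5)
The plan is to follow Haussler's original argument: a random restriction of the packing combined with the edge-density bound for one-inclusion graphs. Let $\mathcal{F}'\subseteq\mathcal{F}$ be a maximum $\epsilon$-separated subfamily, with $M=|\mathcal{F}'|$. Restriction preserves VC-dimension at most $k$. The idea is to project $\mathcal{F}'$ onto a random set $S$ of roughly $k/\epsilon$ coordinates. By the Sauer--Shelah lemma the projection has at most $\sum_{i\le k}\binom{|S|}{i}\le (e|S|/k)^k$ distinct traces. So it suffices to show that, in expectation, the projection does not collapse $\mathcal{F}'$ by more than a constant factor (roughly $e(k+1)$ after optimizing parameters).

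\textbf{Step 1 (one-inclusion density).} First I would prove the combinatorial fact that for any family $\mathcal{G}\subseteq\{0,1\}^m$ of VC-dimension at most $k$, the one-inclusion graph has at most $k|\mathcal{G}|$ edges. Its vertices are $\mathcal{G}$, and two sets are joined if they differ in exactly one coordinate. I would use the standard down-shifting (compression) argument. Shifting in a coordinate does not decrease the edge count and does not increase the VC-dimension. After all shifts the family is downward closed, so every member has at most $k$ elements. In that case each vertex has at most $k$ downward edges, and the bound is immediate.

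\textbf{Step 2 (random sampling and collisions).} Choose a uniformly random sequence $S=(s_1,\dots,s_{m+1})$ of coordinates, say with replacement, and a uniformly random index $j\in\{1,\dots,m+1\}$. Let $S_{-j}$ denote $S$ with $s_j$ deleted. Compare the number of traces $|\mathcal{F}'|_{S}|$ with $|\mathcal{F}'|_{S_{-j}}|$. Each trace on $S_{-j}$ that splits into two traces on $S$ corresponds to an edge of the one-inclusion graph of $\mathcal{F}'|_S$ in direction $j$. By Step 1, averaging over $j$ gives
\[
\mathbb{E}\big[|\mathcal{F}'|_S|-|\mathcal{F}'|_{S_{-j}}|\big]\le \frac{k}{m+1}\,\mathbb{E}|\mathcal{F}'|_S|.
\]
Conversely, I would lower-bound this quantity using separation. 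Fix a trace on $S_{-j}$ shared by a class of $r\ge 2$ members of $\mathcal{F}'$. Any two members of the class differ on at least $\epsilon N$ coordinates, so the fresh coordinate $s_j$ splits the class with probability bounded below in terms of $\epsilon$. Summing over classes, the expected number of members of $\mathcal{F}'$ not isolated by $S_{-j}$ is at most about $\frac{k}{\epsilon(m+1)}$ times $\mathbb{E}|\mathcal{F}'|_S|$.

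\textbf{Step 3 (conclude).} Every member is either isolated by its trace on $S_{-j}$ or lies in a collision class, so
\[
M\le \mathbb{E}|\mathcal{F}'|_{S_{-j}}|+\mathbb{E}[\#\text{non-isolated}] \le \Big(1+\frac{O(k)}{\epsilon m}\Big)(em/k)^k.
\]
Choosing $m\approx (k+1)/\epsilon$ then yields $M\le e(k+1)(2e/\epsilon)^k$ up to the exact constants. The integrality assumption $\epsilon=c/N$ is used only so that the separation hypothesis reads $|X\Delta Y|\ge c$.

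\textbf{Main obstacle.} The hard part is Step 2: making the collision count precise. A class of size $r$ is not split into singletons by one extra coordinate; it is only split into two subclasses. Haussler handles this with a careful variance-type argument over the random choice of $j$. It shows that the expected number of edges in direction $j$ is at least $\epsilon/2$ times the expected number of non-isolated members. My first attempt would be a pairwise-counting argument: inside each class, a random member and a random fresh coordinate separate it from at least half of the class with probability about $\epsilon$. If that loses too much, I would fall back on accepting a slightly worse constant, since only the form $O_k(\epsilon^{-k})$ matters for the later application.
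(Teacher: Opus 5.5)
The paper gives no proof of this statement; it is quoted as Theorem~1 of Haussler. The benchmark is therefore Haussler's argument, and you have its ingredients right: Sauer--Shelah on a random sample, the one-inclusion density bound (your Step~1 is fine), and separation. But Step~2 has a genuine gap, and it is the one you flag at the end.

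The quantity you bound, $|\mathcal{F}'|_S|-|\mathcal{F}'|_{S_{-j}}|$, is the \emph{number} of $S_{-j}$-classes split by $s_j$. Separation only tells you that each non-singleton class is split with probability at least $\epsilon$. So what you actually get is $\epsilon\,\mathbb{E}[\#\text{non-singleton classes}]\le\frac{k}{m+1}\mathbb{E}|\mathcal{F}'|_S|$, an inequality in which $M$ does not appear at all. A class of $r$ members contributes $1$ to your edge count but $r$ to the number of non-isolated members, so your bound on $\mathbb{E}[\#\text{non-isolated}]$ does not follow.

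The mechanism you attribute to Haussler (unweighted direction-$j$ edges at least $\frac{\epsilon}{2}$ times the non-isolated members) is false in general. Take $m=0$ and $\mathcal{F}'$ the family of all $2$-subsets of an $N$-set, which has VC-dimension $2$ and pairwise distance at least $2$. There is one split edge but $\binom N2$ non-isolated members, and $\frac{\epsilon}{2}\binom N2=\frac{N-1}{2}$. This is not a loss of constants, so your fallback does not help. A crude union bound over pairs, with $m\approx 2\ln M/\epsilon$, gives $M=O_k((\epsilon^{-1}\log\epsilon^{-1})^k)$. That would suffice for the use in Lemma~\ref{lm:vc-tww-2}, but it does not prove the stated inequality.

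The missing idea is to weight the one-inclusion graph by class sizes.
\begin{itemize}
\item For a trace $T$ on $S$, let $w(T)$ be the number of members of $\mathcal{F}'$ with trace $T$, and give each edge $\{T,T'\}$ weight $\min(w(T),w(T'))$.
\item Step~1 still bounds the total weight by $kM$. Write $\min(w(T),w(T'))=|\{t\ge 1: w(T)\ge t,\ w(T')\ge t\}|$ and apply Step~1 to each super-level family $\{T: w(T)\ge t\}$; these families have sizes summing to $M$. Equivalently, orient the one-inclusion graph with all in-degrees at most $k$.
\item Averaging over $j$, the expected weight of direction-$j$ edges is at most $\frac{k}{m+1}M$. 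Note that the right-hand side is $M$, not $\mathbb{E}|\mathcal{F}'|_S|$.
\item Conversely, a class $C$ that $s_j$ splits into $C_1,C_2$ contributes $\min(|C_1|,|C_2|)\ge |C_1||C_2|/|C|$.
\item Since $s_j$ is uniform and independent of $S_{-j}$, we have $\mathbb{E}\,|C_1||C_2|=\sum_{\{X,Y\}\subseteq C}\Pr[s_j\in X\Delta Y]\ge\binom{|C|}{2}\epsilon$. So $C$ contributes at least $\frac{\epsilon}{2}(|C|-1)$ in expectation.
\item Summing over classes gives $\frac{\epsilon}{2}\bigl(M-\mathbb{E}|\mathcal{F}'|_{S_{-j}}|\bigr)\le\frac{k}{m+1}M$, that is, $M\bigl(1-\frac{2k}{\epsilon(m+1)}\bigr)\le\sum_{i\le k}\binom{m}{i}\le (em/k)^k$.
\item With $m=\lceil 2(k+1)/\epsilon\rceil-1$ this yields $M\le (k+1)\bigl(1+\frac{1}{k}\bigr)^k(2e/\epsilon)^k\le e(k+1)(2e/\epsilon)^k$, exactly the stated bound.
\end{itemize}
So the right sample size is about $2(k+1)/\epsilon$, not $(k+1)/\epsilon$. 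That choice is where the factors $2$ and $e(k+1)$ come from.
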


For our use, we apply Lemma~\ref{lm:haussler} to graphs, as follows. 

\begin{Lemma}\label{lm:vc-tww-1}
    Let $G$ be a graph with $\vcdim(G)\leq k$ for some constant $k \in \mathbb{N}$. Consider not-necessarily disjoint subsets $A,B \subseteq V(G)$ with $|A| = a$. Then for all $z \in \{1,\dots,a\}$, either
    \begin{enumerate}[(i)]
        \item $|B| \leq e(k+1)\left( \frac{2e}{z / a}\right)^k$; or
        \item \mbox{there exists a pair of vertices $x,y \in B$ with $x \neq y$ such that $|A \cap (N(x) \Delta N(y))|<z$.}
    \end{enumerate}
\end{Lemma}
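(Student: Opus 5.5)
We apply Lemma~\ref{lm:haussler} to the trace of the neighbourhoods of $B$ on $A$. Let $\mathcal{F} = \{N(x)\cap A : x \in B\}$, viewed as a set system on the ground set $A$, so $N = a$.

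First, $\mathcal{F}$ has VC-dimension at most $k$. Suppose $S\subseteq A$ is shattered by $\mathcal{F}$. For every $T\subseteq S$ there is some $x\in B$ with $N(x)\cap A\cap S = T$. Since $S\subseteq A$, this reads $N(x)\cap S = T$. Hence $S$ is shattered by $\{N(v):v\in V(G)\}$, and so $|S|\le \vcdim(G)\le k$.

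Now suppose (ii) fails, i.e.\ every pair of distinct $x,y\in B$ satisfies $|A\cap(N(x)\Delta N(y))|\ge z$. Since $A\cap(N(x)\Delta N(y)) = (N(x)\cap A)\Delta(N(y)\cap A)$, two consequences follow:
\begin{itemize}
\item the map $x\mapsto N(x)\cap A$ is injective on $B$, because $z\ge 1$ forces distinct vertices to have distinct traces, so $|\mathcal{F}| = |B|$;
\item any two members of $\mathcal{F}$ differ in at least $z = \epsilon a$ elements, where $\epsilon = z/a$ and $z$ is an integer with $1\le z\le a$.
\end{itemize}
Thus $\mathcal{F}$ itself is an $\epsilon$-separated subfamily, so $\mathcal{M}(\epsilon,\mathcal{F})\ge|\mathcal{F}|=|B|$.

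Lemma~\ref{lm:haussler} then gives
\[
|B|\le e(k+1)\left(\frac{2e}{z/a}\right)^k,
\]
which is (i).

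The only step needing care is the reduction of the VC-dimension to the restricted system. The rest is bookkeeping: the injectivity argument uses $z\ge1$, and $A$ and $B$ may overlap without affecting anything, since only neighbourhoods are restricted to $A$.
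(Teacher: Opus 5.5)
Your proof is correct and is the paper's argument: apply Haussler's bound (Lemma~\ref{lm:haussler}) to the trace system $\{N(b)\cap A : b\in B\}$ on ground set $A$, with $N=a$ and $c=z$. You also correctly fill in two details the paper leaves implicit: the trace system inherits VC-dimension at most $k$, and when (ii) fails the traces are pairwise distinct, so the system has exactly $|B|$ members and is itself $\epsilon$-separated.
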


This follows from applying Lemma~\ref{lm:haussler} to the set system $\{N(b) \cap A : b \in B\}$ with ground set $A$ (hence $N=a$) and $c=z$.

This tells us that given a graph with bounded VC-dimension, for any two subsets $A$ and $B$ of $G$, either $B$ is small or there exists a pair of vertices in $B$ with neighbourhoods in $A$ that are not substantially different from each other. This is the main tool we will use to help us find vertices with small symmetric difference between their neighbourhoods in Section~\ref{sec:main}, so that we may partition our graph into sets of vertices that make good candidates for contraction while maintaining bounded red degree over the contraction sequence.

\subsection{Interval Graphs}

\begin{Definition}[Interval Graph \cite{interval}]
    For a family $\mathcal{I}$ of closed intervals on the real line, the corresponding interval graph $G_\mathcal{I}$ is the graph with vertex-set $\mathcal{I}$ where two vertices $a, b \in \mathcal{I}$ are adjacent in $G_\mathcal{I}$ if and only if their corresponding intervals have a non-empty intersection.
\end{Definition}

For an interval $I = [a,b]$ with $ a < b$, we say that $I$ has \emph{left endpoint} $a$ and \emph{right endpoint} $b$. Without loss of generality, we may assume that all endpoints are integers. 
In fact, if $G$ is an $n$-vertex interval graph, we may assume that $G$ corresponds to a family $\mathcal{I}$ of intervals such that all intervals in $\mathcal{I}$ have distinct endpoints in $\{1, \dots, 2n\}$.

\section{The Twin-Width Tool}\label{sec:tww}
Our first result is a tool for obtaining twin-width bounds in general.

Theorem~\ref{thm:secret-tww} below tells us that to show that an input graph $G$ has sub-linear twin-width in the size of $G$, we need only find a partition of $G$ into a bounded number of sets in a sequence such that each but at most one is mixed on a bounded number of vertices later in the sequence, and the one that may be mixed on more has bounded size. The tool gives a contraction sequence witnessing the twin-width as a function of the number of sets, the number of vertices they are mixed on, and the size of the single set that may be mixed on more. This is the technique we will use to obtain upper bounds on the twin-width of graphs of bounded VC-dimension in Section~\ref{sec:main} and of interval graphs in Section~\ref{sec:interval}.

\begin{Theorem}\label{thm:secret-tww}
    Let $\alpha, \beta, \gamma \in \mathbb{R}_{\geq 0}$. Let $G$ be a graph with $V(G)$ partitioned into a disjoint sequence of sets $X_1, \dots, X_t, X_{t+1}$ that satisfy
    \begin{itemize}
        \item $t \leq \alpha$;
        \item $|X_{t+1}| \leq \beta$; and
        \item for each $X_i$ with $i \in \{1,\dots,t\}$, letting \[Y_i = \{ v \in X_{i} \cup \dots \cup X_{t+1}: v\text{ is mixed on } X_i \},\] we have that $|Y_i| \leq \gamma$.
    \end{itemize} 
    Then $\tww(G) \leq \alpha + \max\{\gamma, \beta\}$.
\end{Theorem}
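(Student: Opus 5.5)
The plan is to build an explicit contraction sequence that handles the sets in the given order. For $i=1,\dots,t$ in turn, we contract all vertices of $X_i$, one at a time, into a single vertex. Once this is done, the trigraph has exactly $t+|X_{t+1}|$ vertices: one per $X_i$ with $i\le t$, plus the still-uncontracted vertices of $X_{t+1}$. We then contract these remaining vertices in an arbitrary order.

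The key observation controls red edges between a part and a singleton. Let $S\subseteq X_i$ be a part and let $v\notin S$ be a singleton. A red edge between $S$ and $v$ arises only if $v$ has both a neighbour and a non-neighbour in $S$. This makes $v$ mixed on $X_i$, since the neighbour and non-neighbour also lie in $X_i$. During phase $i$, all singletons other than those of $X_i$ lie in $X_{i+1}\cup\dots\cup X_{t+1}$, and the uncontracted vertices of $X_i$ itself also lie in $X_i\cup\dots\cup X_{t+1}$. So every singleton red-adjacent to a part contained in $X_j$ (with $j\le i$) belongs to $Y_j$. Red edges between two singletons never occur.

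With this, we bound red degrees at an arbitrary moment of phase $i$, when $X_1,\dots,X_{i-1}$ are single vertices and $X_i$ consists of one growing part $P$ plus singletons.
\begin{itemize}
\item A fully contracted $X_j$ with $j<i$ has red neighbours among the other contracted parts, of which there are at most $i-1\le t$ (counting $P$), and among singletons, all of which lie in $Y_j$. Its red degree is therefore at most $\alpha+\gamma$.
\item The part $P$ likewise has at most $i-1\le\alpha$ contracted red neighbours and at most $\gamma$ singleton red neighbours, the latter lying in $Y_i$.
\item A singleton can only be red-adjacent to contracted parts, and there are at most $i\le\alpha$ of these.
\end{itemize}
So every trigraph in phases $1,\dots,t$ has maximum red degree at most $\alpha+\gamma$. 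In the final phase there are at most $t+|X_{t+1}|\le\alpha+\beta$ vertices, so any vertex has red degree at most $\alpha+\beta-1$. Combining the phases gives $\tww(G)\le\alpha+\max\{\gamma,\beta\}$.

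The main point needing care is the mixed-vertex observation. We must check that being mixed on a subset $S\subseteq X_i$ implies being mixed on $X_i$ for a vertex $v\notin S$. We must also check that such singletons indeed lie in $X_i\cup\dots\cup X_{t+1}$, so that they are counted in $Y_i$. Both hold because earlier sets are fully contracted before later ones are touched. The rest is bookkeeping.
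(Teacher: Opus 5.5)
Your proof is correct and takes essentially the same route as the paper. You contract $X_1,\dots,X_t$ in order, bounding each contracted vertex's red degree by the number of other contracted parts ($\le t\le\alpha$) plus its mixed set $Y_j$ ($\le\gamma$), and each singleton's red degree by $t$. The only difference is cosmetic: you finish by contracting the remaining at most $t+|X_{t+1}|$ vertices arbitrarily and bounding red degrees by the vertex count, whereas the paper first contracts $X_{t+1}$ into one vertex and then contracts the $t+1$ resulting vertices, which yields the same bound. Your explicit justification that a red edge between a part and a singleton forces the singleton to be mixed on $X_i$ is a step the paper leaves implicit.
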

\begin{proof}
    Let $G$ be a graph with its vertices partitioned into sets $X_1, \dots, X_t, X_{t+1}$ satisfying the assumptions of Theorem~\ref{thm:secret-tww}. Then a contraction sequence witnessing the twin-width of $G$ is as follows.

    We describe the sequence in three phases. In the first phase, for each $X_i$ in order from 1 to $t$, contract $X_i$ down to a single vertex by arbitrarily choosing two starting vertices to contract to one part $x_i$, then contracting each of the remaining uncontracted vertices of the set one by one into $x_i$. This maintains that $x_i$ is an endpoint for every red edge with an endpoint in $X_i$. The order of vertices contracted does not matter.
    
    During this sequence, while a set $X_i$ is being contracted to a single vertex $x_i$ for some index $i$, $x_i$ may have red edges to the previously contracted vertices $x_j$ for $j < i$ (when they exist), and to the set $Y_i$ of vertices that were mixed on $X_i$. Note that $x_i$ has no red edges to $X_j \setminus  Y_i$ for $j > i$ by definition of $Y_i$. Hence the red degree of $x_i$ is at most $t + |Y_i| \leq \alpha + \gamma$. The red degree of previously contracted $x_j$'s for all $j < i$ remains at most $t + |Y_j| \leq \alpha + \gamma$ since they may have red edges only to $Y_j$ and to the vertices $x_1, \dots, x_i$. For all uncontracted vertices, they have red edges only to the contracted vertices and so their red degree is at most $\alpha$. 
    
    With the sets $X_1, \dots, X_t$ contracted, we focus the second phase of the sequence on the final set $X_{t+1}$. The process of contracting $X_{t+1}$ is the same, repeatedly contracting to a single vertex $x_{t+1}$, but the analysis of the red degree is slightly different. While contracting $X_{t+1}$, the vertex $x_{t+1}$ may have red edges to the previous $x_i$'s for $i \in \{1, \dots, t\}$ and to the not yet contracted vertices of $X_{t+1}$. Hence the red degree of $x_{t+1}$ is at most $t + |X_{t+1}| \leq \alpha + \beta$. The red degree of previously contracted $x_j$ for all $j < t+1$ remains at most $t + |Y_j| \leq \alpha + \gamma$ because they may have red edges only to $Y_j$ and to the vertices in $\{x_1, \dots, x_{t+1}\} \setminus \{x_j\}$. 
    
    For the third and final phase, let $P$ be the vertex-set after the end of the second phase (so $P = \{x_1, \dots, x_{t+1}\}$) and recall that $\alpha\geq t$. We may now finish the sequence by contracting $P$ to a single vertex without creating a vertex of red degree greater than $\alpha$.
    
    Throughout the contraction sequence, we have maintained that all red degrees are at most $\alpha + \max\{\gamma,\beta\}$.
\end{proof}

\section{Graphs of Bounded VC-Dimension}\label{sec:main}

We first need to structure graphs with bounded VC-dimension into a vertex-partition satisfying the constraints of Theorem~\ref{thm:secret-tww} above. Building on Lemma~\ref{lm:vc-tww-1} from Section~\ref{sec:prelim-vc}, we show that there always exists some reasonably-sized set of vertices such that the cumulative difference between their neighbourhoods is not too large. Some of this is similar to the ideas of Theorem 3 in \cite{same-vc}, which uses bounded VC-dimension to partition the graph into a bounded number of sets while controlling edge densities between sets. With the tool of Lemma~\ref{lm:vc-tww-2} in place, we will then be able to partition the graph into the sets needed to apply the technique of Theorem~\ref{thm:secret-tww} and obtain an upper bound on the twin-width.

\begin{Lemma}\label{lm:vc-tww-2}
    Let $k \in \mathbb{N}$ and $\delta_k, n_0 \in \mathbb{R}_{\geq 0}$ such that $0 < \delta_k < \frac{1}{2k+1}$ and $n_0 > \left(e(k+1)(4e)^k\right)^{\frac{1}{1-\delta_k(2k+1)}}$.
    
    For every $n$-vertex graph $G$ with $\vcdim(G) \leq k$ and $n > n_0$, there exists $X \subseteq V(G)$ with $|X| \geq n^{\delta_k}$ such that $Y = \{v \in V(G) : \text{mixed on X}\}$ satisfies $|Y| \leq n^{1-\delta_k}$.
\end{Lemma}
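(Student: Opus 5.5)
The plan is to use a packing/covering argument built on Haussler's bound (Lemma~\ref{lm:haussler}), applied to the set system $\mathcal{F}=\{N(v): v\in V(G)\}$ on the ground set $V(G)$, so $N=n$. A vertex $v$ is mixed on $X$ exactly when $v$ lies in $N(x)\Delta N(y)$ for some $x,y\in X$. The one exception is that $v\in X$ may also be mixed because of its non-adjacency to itself. So if every $x\in X$ has neighbourhood close to that of a fixed centre vertex $c$, we can bound
\[
Y\subseteq X\cup \bigcup_{x\in X}\bigl(N(x)\Delta N(c)\bigr),
\]
since $N(x)\Delta N(y)\subseteq (N(x)\Delta N(c))\cup(N(y)\Delta N(c))$. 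It therefore suffices to find a large set $X$ of vertices whose neighbourhoods all lie within a small symmetric-difference ball around one neighbourhood.

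To find such an $X$, fix a scale $\epsilon=c/n$ with $c$ an integer of order $n^{1-2\delta_k}$; for concreteness, $c$ is roughly $\tfrac12 n^{1-2\delta_k}$, rounded so that $1\le c\le n$. Choose a maximal subfamily $\mathcal{F}'\subseteq\mathcal{F}$ that is pairwise $\epsilon n$-separated. By Lemma~\ref{lm:haussler}, $|\mathcal{F}'|\le M:=e(k+1)(2e/\epsilon)^k$, which is about $e(k+1)(4e)^k n^{2\delta_k k}$. By maximality, every vertex $v$ has $|N(v)\Delta F|<\epsilon n$ for some $F\in\mathcal{F}'$. Assign each vertex to such an $F$ (equivalently, to a vertex $c_F$ realising $F$). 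By pigeonhole, some class has at least $n/M$ vertices. The hypothesis $n>n_0$ is tailored precisely so that $n/M\ge n^{1-\delta_k(2k+1)}\cdot n^{\delta_k}/(e(k+1)(4e)^k)\ge n^{\delta_k}$. Let $X$ consist of $\lceil n^{\delta_k}\rceil$ vertices of this class, with centre $c=c_F$.

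With this $X$, the displayed inclusion gives
\[
|Y|\le |X|+|X|\cdot \epsilon n \approx n^{\delta_k}\bigl(1+\tfrac12 n^{1-2\delta_k}\bigr),
\]
which is at most $n^{1-\delta_k}$ once $n$ is large; the factor $\tfrac12$ is there to absorb the additive $|X|$ and the ceiling.

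I expect the main obstacle to be the bookkeeping of constants rather than the idea itself. The work is in choosing $c$ (it must be an integer) and the size of $X$ so that three things hold simultaneously: $n/M\ge |X|$ exactly under the stated threshold $n_0$, and $|X|(1+\epsilon n)\le n^{1-\delta_k}$. If the factor $\tfrac12$ does not suffice, I would adjust $\epsilon$ by a constant. That adjustment changes $M$ by a constant factor, which is why the $(4e)^k$ in $n_0$ (rather than $(2e)^k$) reflects $\epsilon$ being about $\tfrac12 n^{-2\delta_k}$.
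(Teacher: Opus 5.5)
Your maximal-packing/pigeonhole argument takes a genuinely different route from the paper's, and its core is sound, but the final estimate on $|Y|$ does not close under the stated $n_0$.

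\textbf{Comparison with the paper.} The paper works with the balls $S(v)=\{u:|N(v)\Delta N(u)|\le n^{1-2\delta}\}$ and splits into two cases.
If some ball has at least $n^{\delta}$ vertices, it takes $X$ inside that ball \emph{containing the centre} $v$, and gets $|Y|\le(|X|-1)n^{1-2\delta}\le n^{1-\delta}$.
Otherwise, the graph $H$ of close pairs has maximum degree below $n^{\delta}$. Lemma~\ref{lm:large-IS} then gives a stable set of $H$ (a pairwise far-apart family) of size at least $n^{1-\delta}$, and Lemma~\ref{lm:vc-tww-1} with $z=\lceil n^{1-2\delta}/2\rceil$ contradicts $n>n_0$.
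You run the dual argument directly. Maximality turns Haussler's packing bound into a cover by at most $M$ balls, and pigeonhole replaces both the stable-set lemma and the proof by contradiction. Your check $n/M>n^{\delta}$ is correct under the given $n_0$, provided you round \emph{up}, $c=\lceil n^{1-2\delta}/2\rceil$. Rounding down can violate $M\le e(k+1)(4e)^k n^{2\delta k}$.

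\textbf{The gap.} Below I write $c$ for the integer $\epsilon n$ and $c_F$ for the centre; your proposal uses $c$ for both. Your bound $|Y|\le|X|(1+\epsilon n)$ is below $n^{1-\delta}$ only for large $n$. The lemma must hold for every $n>n_0$, and $n_0$ may lie just above $e(k+1)(4e)^k$ when $\delta$ is small.
Concretely, take $k=1$, $\delta=10^{-3}$, $n=60$, which is admissible since $(8e^2)^{1/0.997}\approx 59.84$. Then $\lceil n^{\delta}\rceil=2$, and the pigeonhole step needs $c\ge 30$, because for $c\le 29$ one has $M=4e^2n/c>60$. Your bound gives $62$, and even $|X|c=60$, against $n^{1-\delta}\approx 59.75$.
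Your fallback of rescaling $\epsilon$ by a constant cannot help. It rescales $M$, and the fixed $n_0$ then no longer guarantees $n/M\ge n^{\delta}$.

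\textbf{The fix.} Count $Y$ more carefully, as the paper does in its first case.
First, the extra term $X$ is superfluous: if $v\in X$ is adjacent to $u\in X$, then $v\in N(u)\Delta N(v)$ because $v\notin N(v)$.
Second, class members satisfy $|N(x)\Delta N(c_F)|\le c-1<n^{1-2\delta}/2$.
Third, $c_F$ lies in its own class, since it is at distance at least $c$ from every other member of $\mathcal{F}'$, so you may put $c_F\in X$.
Then $|Y|\le(|X|-1)(c-1)<n^{\delta}\cdot n^{1-2\delta}/2<n^{1-\delta}$ for every $n>n_0$, and the proof is complete.
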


\begin{proof}
    We used $\delta_k$ in the lemma statement to indicate that this is a parameter dependent on the VC-dimension $k$, but for ease of writing the rest of the proof will use $\delta$ in its place.
    
    For a vertex $v \in V(G)$, define $S(v) = \{u : | N(v) \Delta N(u) | \leq n^{1-2\delta}\}$. That is, $S(v)$ is the set of vertices whose neighbourhoods are not substantially different from the neighbourhood of $v$. Note that $S(v)$ includes $v$ itself.
    
    {\bf Case 1:} There exists a vertex $v \in V(G)$ with $|S(v)| \geq n^{\delta}$. Pick a set $X$ of vertices with $\{v\} \subseteq X \subseteq S(v)$ and $|X| = \lceil n^{\delta} \rceil$. We claim that the set $Y$ of vertices mixed on $X$ is the union of the sets of vertices in the symmetric difference between $N(v)$ and $N(u)$ over all choices of $u \in X$. Clearly $Y$ is a superset of $\bigcup_{u\in X} N(v)\Delta N(u)$ by definition of `mixed'. To see that it is also a subset, let $x' \in Y$ be mixed on two vertices $u,w \in X$. Assume that $x'$ is adjacent to $u$ and non-adjacent to $w$. If $v$ is adjacent to $x'$, then $x'$ is in $N(v) \Delta N(w)$. If $v$ is non-adjacent to $x'$, then $x'$ is in $N(v) \Delta N(u)$.
    
    From the definition of $S(v)$, we know that each set $N(v) \Delta N(u)$ consists of at most $n^{1-2\delta}$ vertices. So $|Y| \leq (|X|-1) \cdot n^{1-2\delta} = \left(\lceil n^{\delta} \rceil-1\right) \cdot n^{1-2\delta} \leq n^{1-2\delta+\delta} = n^{1-\delta}$ and the lemma holds in this case.
    
    {\bf Case 2:} For every vertex $v \in V(G)$, we have $|S(v)| < n^{\delta}$. We will show that this is impossible. To this end, construct an auxiliary graph $H$ on the same vertex-set and make two vertices adjacent if and only if there is a small symmetric difference between their neighbourhoods. Formally, we take $V(H) = V(G)$ and, for distinct vertices $u,v \in V(H)$, the edge $uv$ is in $E(H)$ if and only if $|N(v) \Delta N(u)| \leq n^{1-2\delta}$. For a vertex $v$, let $N_H(v)$ denote the neighbourhood of $v$ in $H$ and $deg_H(v)$ the degree of $v$ in $H$. Then $\{v\} \cup N_H(v) = S(v)$ and $deg_H(v) \leq  n^{\delta}  - 1$. So $\Delta(H) \leq n^\delta-1$.
    
    By Lemma~\ref{lm:large-IS} on the minimum size of a stable set, there exists a set of vertices $B \subseteq V(H)$ that is stable in $H$ with $|B| \geq \frac{n}{ n^{\delta} } = n^{1-\delta}$. For this set $B$ and the set $A = V(G)$, we may now apply Lemma~\ref{lm:vc-tww-1} from Section~\ref{sec:prelim-vc}. We obtain that for all $z \in \{1,\dots,n\}$ either 
    \begin{align*}
\text{(1) } & ~n^{1-\delta} \leq |B| \leq e(k+1)\left( \frac{2e}{z / n}\right)^k \text{ or} \\
\text{(2) } & ~n^{1-2\delta} < z.
\end{align*}

    Observe that for $z = \lceil\frac{n^{1-2\delta}}{2}\rceil$, inequality (2) does not hold. This works because 

    \[n^{1-2\delta} \geq n_0^{1-2\delta} \geq (4e)^{k(1-\frac{2}{2k+1})}\geq (4e)^\frac{2}{3} \geq 2\]

    which implies that $z \geq 1$ and so $z\leq \tfrac{ n^{1-2\delta+1}}{2} \leq \tfrac{2n^{1-\delta}-1}{2} < n^{1-\delta}$, contradicting (2).

So for $z=\lceil \frac{n^{1-2\delta}}{2} \rceil$ inequality (1) must hold, and  we obtain

    \[n^{1-\delta} \leq e\left(k+1\right)\left( \frac{2e}{\lceil n^{1-2\delta}/{2}\rceil n}\right)^k \leq e\left( k+1\right)\left(\frac{2e}{n^{1-2\delta}/2n}\right)^k = e\left(k+1\right)\left(4en^{2\delta}\right)^k.\]
    
We can rearrange this to obtain
    \[n^{1-\delta-2\delta k} \leq e(k+1)(4e)^k.\]

Isolating for $n$, we have that \[n < \left(e(k+1)(4e)^k\right)^{\frac{1}{1-\delta(2k+1)}}\] which is a contradiction to the assumption that $n> n_0$. 
    
    Therefore we are always in the first case, and the lemma holds.
\end{proof}

We now have all of the necessary tools to show that bounded VC-dimension implies sub-linear twin-width. We will use the previous result to obtain the partition that will serve as input to Theorem~\ref{thm:secret-tww}, which will then give us a bound on the twin-width.

\begin{Theorem}\label{thm:vc-tww}
    Let $k \in \mathbb{N}$ and let $\delta_k, n_0$ be as in Lemma~\ref{lm:vc-tww-2}. For an $n$-vertex graph $G$ with $n^{1-\delta_k} \geq n_0$, if $\vcdim(G) \leq k$, then $\tww(G) \in  O(n^{1-\delta_k})$.
\end{Theorem}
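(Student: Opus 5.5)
We build the partition required by Theorem~\ref{thm:secret-tww} greedily, peeling off sets with Lemma~\ref{lm:vc-tww-2}. Set $G_0 = G$. For $i = 1,2,\dots$, as long as the current induced subgraph $G_{i-1}$ has more than $n^{1-\delta_k}$ vertices, apply Lemma~\ref{lm:vc-tww-2} to it. This is allowed because VC-dimension does not increase under taking induced subgraphs, so $\vcdim(G_{i-1}) \le k$. The lemma gives a set $X_i \subseteq V(G_{i-1})$ with $|X_i| \ge |V(G_{i-1})|^{\delta_k}$ such that at most $|V(G_{i-1})|^{1-\delta_k} \le n^{1-\delta_k}$ vertices of $G_{i-1}$ are mixed on $X_i$. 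We then set $G_i = G_{i-1} - X_i$. Once fewer than $n^{1-\delta_k}$ vertices remain (or exactly that many), we let $X_{t+1}$ be the leftover set, so $|X_{t+1}| \le n^{1-\delta_k} =: \beta$.

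Next we check the hypotheses of Theorem~\ref{thm:secret-tww}. First, the lemma may be applied at each step: it needs $|V(G_{i-1})| > n_0$, and we know $|V(G_{i-1})| > n^{1-\delta_k} \ge n_0$. This is exactly why the statement assumes $n^{1-\delta_k} \ge n_0$.

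Second, the set $Y_i$ from Theorem~\ref{thm:secret-tww} is controlled. It consists of the vertices of $X_i \cup \dots \cup X_{t+1} = V(G_{i-1})$ that are mixed on $X_i$. Being mixed is determined by adjacencies inside $G$, and $G_{i-1}$ is an induced subgraph. So $Y_i$ is exactly the set $Y$ produced by the lemma in $G_{i-1}$, and $|Y_i| \le n^{1-\delta_k} =: \gamma$.

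Third, we must bound $t$, and this is the main step. While a step is taken, $|V(G_{i-1})| > n^{1-\delta_k}$, so each removed set satisfies
\[|X_i| \ge |V(G_{i-1})|^{\delta_k} > n^{(1-\delta_k)\delta_k}.\]
This gives only $t < n^{1-\delta_k+\delta_k^2}$, which is slightly worse than the target $n^{1-\delta_k}$.

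To reach the target, I would instead group the steps into phases, halving the number of remaining vertices in each. During the phase in which between $m/2$ and $m$ vertices remain, each removed set has size at least $(m/2)^{\delta_k}$. So that phase takes at most
\[\frac{m/2}{(m/2)^{\delta_k}} = (m/2)^{1-\delta_k}\]
steps. Summing over the phases gives a geometric series in $m^{1-\delta_k}$, so $t = O(n^{1-\delta_k})$ with a constant that depends only on $k$; call this bound $\alpha$. I expect this bookkeeping to be the only delicate point.

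With $\alpha, \beta, \gamma$ all $O(n^{1-\delta_k})$, Theorem~\ref{thm:secret-tww} gives $\tww(G) \le \alpha + \max\{\beta,\gamma\} = O(n^{1-\delta_k})$. Finally, letting $\delta_k$ tend to $\frac{1}{2k+1}$, or simply choosing $\delta_k$ close to it, recovers the bound $O(n^{1-\frac{1}{2k+1}})$ quoted in the introduction, up to the choice of $\delta_k$.
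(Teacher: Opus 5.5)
Your proposal is correct and follows the paper's proof essentially step for step: greedy peeling via Lemma~\ref{lm:vc-tww-2} while more than $n^{1-\delta_k}$ vertices remain, the same phase-halving geometric sum giving $t\in O(n^{1-\delta_k})$, and $\beta=\gamma=n^{1-\delta_k}$ fed into Theorem~\ref{thm:secret-tww}. Two small nits. A phase can take up to $\lceil (m/2)^{1-\delta_k}\rceil$ steps rather than $(m/2)^{1-\delta_k}$; the paper bounds this by $2(m/2)^{1-\delta_k}$, and it does not affect the conclusion. Your closing aside only gives $O(n^{1-\frac{1}{2k+1}+\varepsilon})$, because $n_0$ blows up as $\delta_k\to\frac{1}{2k+1}$.
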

\begin{proof}
    We again write $\delta$, rather than $\delta_k$, to simplify notation.
    Let $X_0$ be the empty set. Define $X_1,\dots,X_t$ iteratively as follows, where $i$ is index of the current iteration.

    \begin{enumerate}
        \item\label{stp:delete} Consider $G' = G\backslash (\bigcup_{j<i}X_j)$. Note that $\vcdim(G') \leq \vcdim(G) \leq k$.
        \item\label{stp:stop} Set $n' = |G'|$.  If $n' \leq n^{1-\delta}$, then stop and set $t=i-1$ and $X_{t+1}= V(G')$.
        \item\label{stp:lemma} We now know that $n' > n^{1-\delta}$. Apply Lemma~\ref{lm:vc-tww-2} to $G'$ to obtain a set $X_i$ with $|X_i| \geq (n')^{\delta}$ such that the set $Y_i$ of vertices mixed on $X_i$ satisfies $|Y_i| \leq (n')^{1-\delta}$. 
    \end{enumerate}
    
    We argue that the partition $G = \{X_1,\dots,X_t, X_{t+1}\}$ obtained above satisfies the constraints of Theorem~\ref{thm:secret-tww}.

    First, we analyze the number of iterations executed by the algorithm. We group our analysis into phases defined by the size of $G'$. 

For $0\leq j\leq \lceil \log_2 n \rceil$, let phase $j$ 
encompass all iterations for which $\frac{n}{2^{j+1}} < n' \leq \frac{n}{2^j}$. 
If iteration $i$ falls in Phase $j$, then in Step~\ref{stp:lemma} we have $|X_i| \geq (n')^{\delta} \geq \left(\frac{n}{2^{j+1}}\right)^{\delta}$. 
So we delete at least $\left(\frac{n}{2^{j+1}}\right)^{\delta}$ vertices in each iteration of Phase $j$, implying that it has at most $\left\lceil \frac{n}{2^{j+1}} / \left(\frac{n}{2^{j+1}}\right)^{\delta} \right\rceil \leq 2\left(\frac{n}{2^{j+1}}\right)^{1-\delta}$ iterations. 
Then the total number of iterations, and thus the total number $t$ of sets $X_i$ obtained, is at most

    \[\sum_{j=0}^{\lceil\log_2n\rceil} 2\left( \frac{n}{2^{j+1}} \right)^{1-\delta} 
  2n^{1- \delta}\sum_{j=0}^{\lceil\log_2 n\rceil} \left(\left(\frac{1}{2} \right)^{1-\delta}\right)^{j+1}
  \in O(n^{1-\delta}) \]
   and we set $\alpha$ to be this number of sets. 
    
    Note that $n^{1-\delta} \geq n_0$ by assumption. So the set $X_{t+1}$ has size at most $n^{1-\delta}$, which we take as the parameter $\beta$.

    Each set $Y_i$ of vertices mixed on $X_i$ has size at most $(n')^{1-\delta} \leq n^{1-\delta}$ by choice of $X_i$ and we set $\gamma=n^{1-\delta}$.
    
    We now have $X_1,\dots, X_{t+1}$ covering all of $G$ and satisfying the constraints of Theorem~\ref{thm:secret-tww} for parameters 
    $\alpha,\beta,\gamma\in O(n^{1-\delta})$.
   It follows that $\tww(G) \in O(n^{1-\delta})$.
\end{proof}

The above result gives us that all graphs with small VC-dimension have twin-width at most sub-linear. As a corollary, in Table~\ref{table:vc-tww} we list a few select classes of graphs with bounded VC-dimension that we find interesting and state explicitly the bound obtained from Theorem~\ref{thm:vc-tww}. While planar graphs are known to have twin-width at most 8 \cite{planartww8}, we list them here to provide a frame of reference.

\begin{table}[h!]
\begin{center}
    \begin{tabular}{c |c |c }
        \emph{Classes} & \emph{vcdim} & \emph{tww} \\
        \hline
    
        Interval \cite{hered-vc} & $\leq 2$ & $\leq n^{\frac{4}{5}+\varepsilon}$\\
    
        Unit Disk \cite{hered-vc}, Permutation \cite{permutationvc} & $\leq 3$ & $\leq n^{\frac{6}{7}+\varepsilon}$\\
    
        Disk \cite{diskvc}, Line \cite{linevc}, Planar\cite{diskvc} & $\leq 4$ & $\leq n^{\frac{8}{9}+\varepsilon}$\\
    \end{tabular} 
\end{center}
\caption{Classes of bounded VC-dimension and the upper bounds on their twin-width, which hold for arbitrarily small $\varepsilon$.}\label{table:vc-tww}
\end{table}

\section{Interval Graphs}\label{sec:interval}

We now proceed to give better upper bounds on the twin-width of interval graphs than those obtained from Theorem~\ref{thm:vc-tww}, and then also give lower bounds. The lower bound is constructive, while the upper bound leverages the ordering of neighbourhoods to find a suitable partition for Theorem~\ref{thm:secret-tww} from Section~\ref{sec:tww}.

\begin{Theorem}\label{thm:tww-interval-ub}
    For an n-vertex interval graph $G_\mathcal{I}$, $\tww(G_\mathcal{I}) \in O(\sqrt{n})$.
\end{Theorem}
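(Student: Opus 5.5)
We plan to apply Theorem~\ref{thm:secret-tww} with $\alpha,\beta,\gamma$ all of order $\sqrt{n}$, using the interval representation to build the partition. Fix a representation $\mathcal{I}$ in which all endpoints are distinct integers in $\{1,\dots,2n\}$. Let $s=\lceil\sqrt{2n}\rceil$ and cut the line $[1,2n]$ into $s$ consecutive \emph{windows} $W_1,\dots,W_s$, each containing at most $\lceil 2n/s\rceil\le s$ integer points, and hence at most $s$ endpoints. We then classify each interval by the pair of windows containing its left and right endpoints. Using these pairs directly would give up to $s^2$ classes, which is too many. So we group only by the window of the \emph{left} endpoint, and order the groups so that the sets contracted early are mixed on few later vertices.

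The key observation is as follows. Let $X$ be the set of intervals whose left endpoint lies in window $W_j$. Consider an interval $J$, and suppose $J$ contains no endpoint lying in $W_j$ and is not itself contained in the span of $W_j$. Then $J$ is either entirely to the left of $W_j$, entirely to the right, or covers $W_j$.
\begin{itemize}
\item If $J$ is entirely to the right, it is anticomplete to $X$ unless some interval of $X$ reaches it. Intervals of $X$ reach different distances, so this gives no control.
\end{itemize}
We therefore refine the order: process the sets by decreasing right endpoint. Let $X_i$ consist of the intervals whose right endpoint lies in the $i$-th window counted from the right, and order these sets from the rightmost window to the leftmost. Consider a later vertex $v$, i.e.\ one whose right endpoint lies in a window to the left of $X_i$'s window or in the same window.
\begin{itemize}
\item If the right endpoint of $v$ is in a window strictly left of $W$, and the left endpoint of $v$ is left of every left endpoint in $X_i$, then $v$ meets an interval of $X_i$ exactly when its right endpoint is at least that interval's left endpoint.
\end{itemize}
This is still mixed in general, so a cleaner approach is needed: classify intervals by the window of their \emph{left} endpoint and process windows from right to left. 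An interval $u$ with left endpoint in $W$ satisfies $u\supseteq [\ell_u, r_u]$, where every $X_i$-interval contains the right end of $W$ unless its right endpoint is also in $W$. A later vertex $v$ has its left endpoint in $W$ or to the left of $W$. If $v$'s right endpoint lies to the right of $W$, then $v$ meets every interval of $X_i$ (each starts inside $W$, which $v$ spans from its left endpoint onward), so $v$ is complete. If $v$'s right endpoint lies left of $W$, then $v$ is anticomplete. So $v$ can be mixed only if $v$ has an endpoint in $W$, and there are at most $s$ such intervals. Hence $|Y_i|\le s$.

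The only exception is intervals of $X_i$ whose right endpoint is also in $W$, but these are also handled, because the argument above used only left endpoints in $W$. This yields $t\le s$ parts, $\gamma\le s$, and $X_{t+1}=\emptyset$, so Theorem~\ref{thm:secret-tww} gives $\tww(G_\mathcal{I})\le 2s=2\lceil\sqrt{2n}\rceil$. The step needing the most care is the complete/anticomplete case check. In particular, in the boundary case where $v$'s left endpoint is inside $W$, $v$ has an endpoint in $W$ and so is counted in the $\le s$ budget; the bound on the number of endpoints per window then gives $|Y_i|\le s$.
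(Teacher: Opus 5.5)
Your final argument is correct and is the paper's proof reflected left to right. The paper groups intervals by the window containing their \emph{right} endpoint and contracts windows from left to right; you group by \emph{left} endpoint and contract from right to left. The same complete/anticomplete case check then gives $t\le s$, $\gamma\le s$, $\beta=0$, and hence the bound $2\lceil\sqrt{2n}\rceil$. In a write-up, drop the two abandoned attempts (the bulleted false starts) and present only this final partition.
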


\begin{proof}
Let $\mathcal{I}$ be a family of $n$ intervals.
Without loss of generality, assume that no two intervals have an endpoint in common and that all endpoints are integers
in the range $\{1, \dots, 2n\}$. 
For the rest of this proof, we refer to the vertices of $G_\mathcal{I}$ and the intervals of $\mathcal{I}$ interchangeably.

Define points on the real line $x_i = i\cdot \sqrt{2n}$ for $i \in \{1,\dots, \lceil\sqrt{2n}\rceil\}$ and sets $S_i = \{I \in \mathcal{I}: I = [a,b], i = \min\{j : x_j \geq b\}\}$. Let $x_0=0$. Observe that all intervals in $S_i$ have their right endpoint in the range $R_i = (x_{i-1}, x_i]$, which is a range of length $\sqrt{2n}$.

We want to use the sets $S_i$ as the partition of $G_\mathcal{I}$ for Theorem~\ref{thm:secret-tww}. Then, to satisfy the constraints on this partition, we need two conditions.
\begin{enumerate}[(1.)]
    \item \label{1} The sets $S_i$ are pairwise disjoint and $\bigcup S_i = V(G_\mathcal{I})$.
    \item \label{3} For each $S_i$, there is a bounded number of vertices that are mixed on $S_i$ in the union of $S_i$ and of all later $S_j$'s for all $j \geq i$.
\end{enumerate}

The first condition is immediate. We note that the number of sets $S_i$ is $\lceil\sqrt{2n}\rceil$, which we will take as the parameter $\alpha$ for Theorem~\ref{thm:secret-tww}. We will now show that we have (\ref{3}.) by analyzing which vertices may be mixed on a given $S_i$.

We claim that an interval $l = [a,b]$ in $S_j$  (for $j \geq i$) is mixed on $S_i$ if and only if $l$ has an endpoint in $R_i$. Clearly all intervals in $S_i$ have an endpoint in $R_i$, so we only need to show this for $j>i$ where we know that $b > x_i$ is not in $R_i$. So assume that $a \notin R_i$. If $a > x_i$, then $l$ is disjoint from all intervals in $S_i$, and hence $l$ is anti-complete to $S_i$. Otherwise, if $a \leq x_{i-1}$, then $l$ intersects all intervals in $S_i$, and hence $l$ is complete to $S_i$. 

So the only intervals that are mixed on $S_i$ are those with at least one endpoint in $R_i$. By the assumptions that no two intervals share an endpoint in common and that all endpoints are integers, there are at most $\lceil\sqrt{2n}\rceil$ intervals with an endpoint in $R_i$. This bound of $\lceil\sqrt{2n}\rceil$ is exactly what we need for (\ref{3}.), which we take as the parameter $\gamma$ for Theorem~\ref{thm:secret-tww}.

Note that since all sets $S_i$ have such a bound on the number of intervals which are mixed on them, we do not need the set $X_{t+1}$ from Theorem~\ref{thm:secret-tww}. So we may treat this set as empty and take the parameter $\beta$ to be zero.

Then applying Theorem~\ref{thm:secret-tww} with the partition $X_i=S_i$ for all $i \in \{1,\dots,t\}$ and $t = \lceil\sqrt{2n}\rceil$, and $X_{t+1} = \emptyset$ and with the parameters $\alpha = \lceil\sqrt{2n}\rceil$, $\gamma = \lceil\sqrt{2n}\rceil$, and $\beta = 0$, we obtain that $\tww(G_\mathcal{I}) \leq 2\lceil\sqrt{2n}\rceil$ which is in $O(\sqrt{n})$.
\end{proof}

\begin{Theorem}\label{thm:tww-interval-lb}
Let $r \in \mathbb{N}$ with $r \geq 2$, 
and let $n = 3r^2-2$. Then there exists an $n$-vertex interval graph with twin-width at least $(\tfrac{n+2}{12})^{1/4}-1$.
\end{Theorem}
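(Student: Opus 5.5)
The plan is to build a split interval graph whose clique side encodes an $r\times r$ grid permutation, and to show every contraction sequence creates a red degree of roughly $\sqrt{r}$. Note that $(\tfrac{n+2}{12})^{1/4}=(r^2/4)^{1/4}=\sqrt{r/2}$, so with $n=3r^2-2$ the goal is red degree at least $\sqrt{r/2}-1$.

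\textbf{Construction.} Take $r^2$ \emph{long} intervals $I_{p,q}=[a_{p,q},b_{p,q}]$, $p,q\in\{1,\dots,r\}$, all containing the point $0$, so they form a clique $K$.
\begin{itemize}
\item The left endpoints (all negative) are ordered lexicographically by $(p,q)$.
\item The right endpoints (all positive) are ordered lexicographically by $(q,p)$, i.e.\ by the transposed grid order.
\end{itemize}
Between any two consecutive left endpoints, and between any two consecutive right endpoints, place a tiny \emph{point} interval. This gives $2(r^2-1)$ pairwise disjoint intervals forming a stable set $S$, and $n=r^2+2r^2-2=3r^2-2$.

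A point left of $0$ is adjacent exactly to the long intervals whose left endpoint lies to its left, a prefix of the $(p,q)$-order. Symmetrically, a point right of $0$ sees a suffix of the $(q,p)$-order. So two long intervals $I,J$ are distinguished by every point lying between their left endpoints or between their right endpoints. The grid structure ensures that for any set of $m$ long intervals, at least about $\sqrt m$ distinct "rows" $p$ or distinct "columns" $q$ occur. Hence their endpoints are spread across many distinct row-blocks on the left or column-blocks on the right, and each block boundary carries its own point intervals.

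\textbf{Lower bound.} Fix any contraction sequence and set $s=\lceil\sqrt{r/2}\,\rceil$. Consider the first trigraph in which some part $P$ contains long intervals from at least $s$ different rows, or from at least $s$ different columns; by symmetry assume rows. Such a moment exists, because at the end everything is one part.

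Just before this moment, every part meets fewer than $s$ rows and fewer than $s$ columns. So every part is contained in a union of fewer than $s$ row-blocks on the left and fewer than $s$ column-blocks on the right. Each row boundary between two rows used by $P$ has a point interval that is mixed on $P$. The point intervals at distinct boundaries lie in distinct parts, or else the part containing them is mixed on $P$ in a way we can still charge.

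I would count as follows. $P$ spans at least $s$ rows, so there are at least $s-1$ boundaries $\beta$ strictly inside its row range. For each $\beta$, the separating point interval at $\beta$ lies in some part $Q_\beta$, and $Q_\beta$ is red-adjacent to $P$ unless $Q_\beta$ is homogeneous to $P$.
\begin{itemize}
\item If $Q_\beta$ consists only of left points, homogeneity to $P$ would force all of $Q_\beta$'s points to lie outside $P$'s row range. That contradicts the choice of $\beta$.
\item If $Q_\beta$ contains long intervals, use the limited spread of $Q_\beta$ (fewer than $s$ rows and columns) to bound how many boundaries a single part can absorb. A pigeonhole over the roughly $r/s$ boundaries per part should then give at least $s-1\ge\sqrt{r/2}-1$ distinct red neighbours of $P$.
\end{itemize}

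\textbf{Main obstacle.} The delicate step is the second bullet: parts mixing long intervals and point intervals, which can absorb several separating points at once. This is where the choice of thresholds matters: $s$ rows versus $r$, and it is what produces the fourth-root rather than a square-root bound. If the direct pigeonhole fails, I would instead choose the "first moment" as the first time some part has $s^2$ long intervals. I would then use the grid permutation, in Erd\H{o}s–Szekeres fashion, to guarantee $s$ rows or $s$ columns, trading a square root in the counting.
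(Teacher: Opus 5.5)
The construction is exactly the paper's: left endpoints in $(p,q)$-lexicographic order, right endpoints in $(q,p)$-order, one stable interval in each gap, $n=3r^2-2$. Your arithmetic $(\tfrac{n+2}{12})^{1/4}=\sqrt{r/2}$ is also right. The gap is in the lower-bound argument. Before your critical moment, the only thing you control is that each part's long intervals meet fewer than $s$ rows and fewer than $s$ columns. This says nothing about stable intervals. So nothing stops a single part from containing all the separating points, whether that part is purely stable or is $P$ itself. The step ``points at distinct boundaries lie in distinct parts, or else \dots\ we can still charge'' is therefore unsupported.

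The claim it feeds is in fact false. Take a sequence that first contracts all left stable intervals into one part $Q_L$ and all right ones into one part $Q_R$, and only afterwards touches clique vertices. Clique-only parts are pairwise complete and stable parts are pairwise anticomplete. Hence every clique-only part, including your $P$ at your critical moment, has at most two red neighbours, namely $Q_L$ and $Q_R$. The large red degree in that sequence occurs earlier, at $Q_L$, so an argument that looks only at $P$ at your moment cannot work. Your first bullet is also beside the point: any part other than $P$ containing a vertex mixed on $P$ is automatically red to $P$. What actually needs proving is that these parts are many, distinct, and different from $P$.

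The missing idea is to fix the width $w$ of the whole sequence and use it to bound the sizes of parts before the critical moment. This works cleanly when the critical moment is the first time a clique vertex enters a non-singleton part, as in the paper. At that point all clique vertices are still singletons, so a stable part $Q$ is mixed on at least $|Q|/2-1$ distinct parts, which gives $|Q|\le 2w+2$. The new part is then of one of two kinds:
\begin{itemize}
\item A clique vertex $K$ merged with stable vertices. Each clique vertex has at least $2r-2$ stable neighbours, and at most $2w+2$ of them lie in the merged part.
\item Two clique vertices. Their neighbourhoods differ on at least $r$ stable vertices, by the grid structure.
\end{itemize}
In either case the relevant stable vertices are spread over roughly $r/(2w+2)$ parts, each red to the new part, and this gives $(w+1)^2\ge r/2$.

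The fourth root comes from this quadratic inequality together with $r\approx\sqrt{n/3}$, not from a row threshold of $\sqrt r$. Your threshold actually hurts. Once parts may hold up to $(s-1)^2\approx r/2$ clique vertices, a stable part spanning $m$ positions is only guaranteed about $m/s^2$ red neighbours. The width then bounds its size only by about $ws^2$, and even a repaired version of your count gives roughly $w^2s\gtrsim r$, i.e.\ only about $n^{1/8}$. Your Erd\H{o}s--Szekeres fallback is missing the same ingredient.
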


\begin{proof}
We construct an interval graph $G_\mathcal{I}$ on a family $\mathcal{I}$ of $n$ intervals. 
We refer to the intervals in $\mathcal{I}$ and the vertices in $G_\mathcal{I}$ interchangeably throughout this proof, and we will drop $G_\mathcal{I}$ as subscript of the neighbourhood $N_{G_\mathcal{I}}(v)$ of a vertex since we will study no other graph. 
The idea is to make $G_\mathcal{I}$ a split graph such that any clique-vertex is adjacent to many stable-set vertices, yet for any two clique-vertices, their neighbourhoods are quite different.

Figure~\ref{fig:tww-interval-lb} illustrates the following construction. 
First, create a clique $\cal K$ of size $k = r^2$. The key idea here is that any two vertices in $\cal K$ whose left endpoints are close together will have right endpoints that are far apart.
Formally, for all indices $i,j \in \{0,\dots,r-1\}$, we define the interval $K_{i,j} = [ri+j, k+rj+i]$ and let $\mathcal{K} = \{K_{i,j} : i,j \in \{0,\dots, r-1\}\}$. Observe that all intervals in $\cal K$ intersect the point $k-\frac{1}{2}$, so they indeed form a clique.

Now construct a stable set $\mathcal{S}$ on $2k-2$ vertices as follows. For each index $i \in \{0, \dots,2k-2\}$, define the interval $S_i = [i+\frac{1}{3}, i+\frac{2}{3}]$. Let ${\cal S} = \{S_i : i \in \{0,\dots,2k-2\}$ and $i \neq k-1\}$ and let ${\cal S}_L = \{S_i : i \in \{0,\dots,k-2\}\}$ and ${\cal S}_R = S \setminus S_L$. Note that $|\mathcal{S}| = 2k-2$ as desired.

\begin{figure}[ht]
\hspace*{\fill}
\includegraphics[width=\linewidth]{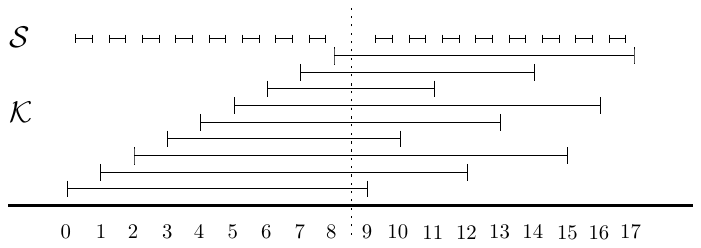}
\hspace*{\fill}
\caption{The constructed graph for 
Theorem~\ref{thm:tww-interval-lb}; here $r=3$ and $k=9$. From bottom to top, the intervals in $\mathcal{K}$ are $K_{0, 0}, K_{0, 1}, K_{0, 2}, K_{1,0}, K_{1,1}, K_{1,2}, K_{2, 0}, K_{2,1}, K_{2,2}.$
}
\label{fig:tww-interval-lb}
\end{figure}

This completes the construction of $G_\mathcal{I}$.   We first prove two useful properties of $G_\mathcal{I}$.
Our first claim concerns the neighbourhood of any clique-vertex.

\begin{Claim}
{\label{eq:k-nbrs} Each interval $K_{i,j}$ in $\mathcal{K}$ satisfies $|N(K_{i,j})\cap S| \geq 2r-2$.}
\end{Claim}
\begin{proof}
There are three cases. In the first case, interval $K_{i,j}$ starts far enough to the left such that it intersects the rightmost $2r-2$ intervals of $\mathcal{S}_L$, i.e., $S_{k-2}$ and the $2r-3$ intervals of $\mathcal{S}_L$ to its left.   This holds if $ri+j\leq k-2r+3=r^2-2r+1$, which is the case if either $i\leq r-3$ or $i=r-2$ and $j\leq 1$.   
Similarly, if $j \geq 2$ or both $j=1$ and $i\geq r-2$, then $K_{i,j}$ intersects the leftmost $2r-2$ intervals of $\mathcal{S}_R$.
This leaves the case where $i=r-1$ and $j=0$. Here $K_{i,j}$ intersects the $r-1$ rightmost intervals of $\mathcal{S}_L$ and the $r-1$ leftmost intervals of $\mathcal{S}_R$, i.e., $S_{(r-1)r},\dots, S_{(r-1)r+r-2}$ and $S_k, \dots, S_{k+r-2}$. In all three cases, $K_{i,j}$ intersects at least $2r-2$ intervals as desired.
\end{proof}

The next claim formalizes the earlier statement that for any two clique-vertices the neighbourhoods are quite distinct.

\begin{Claim}
{\label{eq:x-size} For any two intervals $K_{i,j},K_{i',j'}\in \mathcal{K}$, we have  $|N(K_{i,j}) \Delta N(K_{i',j'})| \geq r$.}
\end{Claim}
\begin{proof}
Write $L=ir+j, L'=i'r+j', R=jr+i$ and $R'=j'r+i'$ for the left and right endpoints of these intervals.   We are done if
$|L'-L|\geq r$ or $|R'-R|\geq r$, because then the (at least $r$) intervals of $\mathcal{S}$ that fall into the gap
between the endpoints belong to one of the neighbourhoods, but not the other.   We now show
that this is nearly always the case, and the one exception can be handled as well.

\medskip\noindent{Case 1:} $|i-i'|\geq 2$ or $|j-j'|\geq 2$.   Let us assume that $|i-i'|\geq 2$, the other case is done similarly by considering $R,R'$.
Up to renaming of the two intervals, we have $i'\geq i+2$, and therefore $L'= i'r+j'\geq (i+2)r > ir+r+j=L+r$.

\medskip\noindent{Case 2:} $i=i'$ or $j=j'$.   Let us assume that $j=j'$, the other case is done similarly by considering $R,R'$.
Since the intervals are distinct, we have $i\neq i'$, hence up to renaming of the two intervals we have $i'\geq i+1$ and 
$L' = i'r+j' \geq (i+1)r+j = L+r$.

\medskip\noindent{Case 3:} $|i-i'|=1$ and $|j-j'|=1$. Up to renaming we have $i'=i+1$.   If furthermore $j'=j+1$, then   we are done since
$L'= i'r+j'=(i+1)r + j+1= L+r+1$.   This leaves as final case $j'=j-1$, where similar computations show $L'=L+r-1$ and $R=R'+r-1$.
Hence in each of $\mathcal{S}_L$ and $\mathcal{S}_R$ there are $r-1$ intervals in the symmetric difference, and in total there
are $2r-2\geq r$.
\end{proof}

We now show the lower bound on the twin-width of $G_\mathcal{I}$.
Consider a contraction sequence $G_\mathcal{I} = G_n, \dots, G_1$ of width $w$. We will show that $w+1\geq \sqrt{r/2}$, which with $r=\sqrt{k}=\sqrt{\tfrac{n+2}{3}}$ gives the result after appropriate reformulations.

Let $q$ be maximal such that $G_q$ has a part $P$ that is not a singleton and contains a clique-vertex $K_{i,j}$. Note that $q < n$, since all parts of $G_n$ are singletons.   Also in $G_{q+1}$ vertex $K_{i,j}$ was a singleton by choice of $q$, so we obtained part $P$ of $G_q$ by merging some part $P'$ of $G_{q+1}$ with $K_{i,j}$. We claim that no part of $G_{q+1}$ can have been very big.

\begin{Claim}
{\label{eq:part-size}Any part $Q$ of $G_{q+1}$ has size $|Q| \leq 2w+2$.}
\end{Claim}
\begin{proof}
We may assume that $Q \subseteq \mathcal{S}$, for otherwise $Q \cap K \neq \emptyset$ and $|Q|= 1 \leq 2w-2$ by choice of $q$. Up to symmetry between $\mathcal{S}_L$ and $\mathcal{S}_R$, we may assume that $|Q \cap \mathcal{S}_L| \geq \frac{|Q|}{2}$. Let $l = \min\{l':S_{l'} \in Q \cap \mathcal{S}_L\}$ and $m = \max\{l':S_{l'} \in Q \cap \mathcal{S}_L\}$ so that $S_l$ and $S_m$ are the leftmost and rightmost intervals of $\mathcal{S}_L$ in $Q$, respectively. Since $S_l$ has right endpoint $l + \frac{2}{3}$ and $S_m$ has left endpoint $m + \frac{1}{3}$, both of which are less than $k$, it follows that all intervals in $\mathcal{K}$ with left endpoint between $l+1$ and $m$ are mixed on $Q$. Since $Q\subseteq \mathcal{S}$, therefore $Q$ has red edges to every vertex in the set $\{I^x:x \in \{l+1,\dots,m\}\}$, where $I^x$ is the interval of $\mathcal{K}$ whose left endpoint is $x$.   (Note that this exists for all $x\in \{0,\dots,k-1\}$.) Then $Q$ has red degree at least $m-l \geq |Q\cap \mathcal{S}_L|-1 \geq \frac{|Q|}{2}-1$. Since the red degree of $Q$ is at most $w$, the claim follows.
\end{proof}

Now we study the part $P'=P\setminus K_{i,j}$ of $G_{q+1}$ to argue a lower bound on $w$.
As first case, assume that $P' $ contains vertices of $\mathcal{S}$. Let $\mathcal{S}' = (N(K_{i,j})\cap \mathcal{S})\setminus P'$. By 
Claims~%
\eqref{eq:k-nbrs}
and 
\eqref{eq:part-size}%
, we have $|\mathcal{S}'|\geq 2r-2-(2w+2)$. Again by Claim \eqref{eq:part-size}, at least $\tfrac{|\mathcal{S}'|}{2w+2}\geq \frac{2r-2}{2w+2}-1$ parts in $G_q$ contain a vertex in $\mathcal{S}'$.   Let $\hat{P}$ be one such part,  say it contains $\hat{S}\in \mathcal{S}'$.  By definition of $\mathcal{S}'$ we have $\hat{P}\neq P'$.   Therefore $\hat{P}$ is mixed on $P$, because $\hat{S}$ is adjacent to $K_{i,j}$ by definition of $\mathcal{S}'$, and $\hat{S}$ is not adjacent to the vertices of $\mathcal{S}$ in $P'$.
It follows that, in $G_q$, part $P$ has red edges to at least $\tfrac{2r-2}{2w+2}-1$ other parts, hence
$w \geq \frac{r-1}{w+1}-1$, or  $(w+1)^2\geq r-1 \geq \tfrac{r}{2}$ as desired.

\medskip
Now we consider the other case where $P'\subseteq K$, which by our choice of $q$ means that $P'$
is a singleton, say $P' = \{K_{i',j'}\}$.  Consider the symmetric difference $X=N(K_{i,j}) \Delta N(K_{i',j'})$,  which by Claim~\ref{eq:x-size} has size $|X|\geq r$.
In $G_q$, part $P=\{K_{i,j},K_{i',j'}\}$ 
has a red edge to every part of $G_q$ containing a vertex of the set $X$, which is at least $\frac{r}{2w+2}$ parts by Claim \eqref{eq:part-size}. 
Hence $w\geq \frac{r}{2w+2}$, which implies $\tfrac{r}{2}\leq w(w+1)<(w+1)^2$ as desired.
\end{proof}


\newpage

\printbibliography

\begin{appendices}

\section{The Twin-width of Split, Bipartite, and Cobipartite Graphs}\label{appendix}

We need two additional definitions in this section. A \emph{conference graph} of order $n$ is a graph $G$ where every vertex $v\in G$ has degree $\frac{n-1}{2}$ and for each pair $u,v \in G$, if $u$ and $v$ are adjacent then $|N(u) \cap N(v)| = \frac{n-5}{4}$ and if $u$ and $v$ are non-adjacent then $|N(u) \cap N(v)| = \frac{n-1}{4}$. The \emph{tensor product} $G \times H$ of two labelled graphs $G$ and $H$ is the graph whose vertex set is the Cartesian product of $V(G)$ and $V(H)$ and two vertices $(u,v),(u',v')\in G \times H$ are adjacent if and only if $uu'$ is an edge in $G$ and $vv'$ is an edge in $H$.

\begin{Lemma}\label{app:bipartite}
    Bipartite, split, and cobipartite graphs all have twin-width in $\Theta(n)$.
\end{Lemma}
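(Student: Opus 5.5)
The upper bound $O(n)$ is trivial, since any contraction sequence has width at most $n-1$. The work is the linear lower bound. For each of the three classes I will construct an $n$-vertex graph from a conference graph $C$ of order $m$, using the tensor product and the defined adjacency data. I then show that the first contraction of any sequence already creates linear red degree. The reason this suffices is that the first contraction merges two singletons $u,v$. The new vertex then has a red edge to every vertex of $N(u)\Delta N(v)\setminus\{u,v\}$, so $\tww(G)\geq \min_{u\neq v}|N(u)\Delta N(v)|-2$. It therefore suffices to build graphs in each class where every pair of distinct vertices has symmetric difference linear in $n$.

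For a conference graph, $|N(u)\Delta N(v)| = \deg u+\deg v-2|N(u)\cap N(v)|$. This equals $(n-1)-\frac{n-5}{2}=\frac{n+3}{2}$ for adjacent pairs and $\frac{n-1}{2}+$ something similar for non-adjacent pairs. In either case it is at least roughly $n/2$.

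\textbf{Bipartite.} Take $G=C\times K_2$, the bipartite double cover. It has vertices $(u,0),(u,1)$, with $(u,a)$ adjacent to $(v,1-a)$ iff $uv\in E(C)$. Two vertices on the same side have neighbourhoods that are copies of $N_C(u),N_C(v)$ on the other side, so their symmetric difference is about $m/2$. For two vertices on opposite sides, the neighbourhoods lie in different sides, so the symmetric difference is $\deg u+\deg v = m-1$. With $n=2m$, every pair has symmetric difference at least about $n/4$.

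\textbf{Split.} Make one copy $\{(u,0)\}$ a clique and the other $\{(u,1)\}$ a stable set, keeping the cross edges of $C\times K_2$. Same-side pairs still differ on the opposite side by at least about $m/2$; any added clique edges only affect the vertices themselves. For a clique vertex $(u,0)$ and a stable vertex $(v,1)$, the clique vertex is adjacent to almost all other clique vertices and the stable vertex to none of them. Hence the symmetric difference is at least $m-2$.

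\textbf{Cobipartite.} Cobipartite graphs are complements of bipartite graphs, and twin-width is closed under complementation. So I will take the complement of $C\times K_2$, or simply invoke that symmetric differences are preserved by complement up to the pair itself.

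The main obstacle is bookkeeping: verifying in each case the cross-side and same-side symmetric difference counts, including whether $u,v$ themselves lie in each other's neighbourhoods. I also need the existence of conference graphs for infinitely many $m$, e.g.\ Paley graphs for primes $m\equiv 1 \pmod 4$. These give all sufficiently large $n$ up to constants by Bertrand-type prime density. Intermediate $n$ are handled by taking induced subgraphs, since twin-width is monotone under induced subgraphs.
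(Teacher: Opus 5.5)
Your argument is essentially the paper's. You use the bipartite double cover of a conference graph, note that the first contraction already creates red degree at least $\min_{u\neq v}|N(u)\Delta N(v)|-2$, turn one side into a clique for split graphs, and take complements for cobipartite graphs. Using Paley graphs instead of the cited Ahn--Hendrey--Kim--Ohm family is fine, since only the conference-graph parameters are actually used.

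Two slips need fixing.

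First, in the split case the stable vertex $(v,1)$ is \emph{not} anticomplete to the clique side. It is adjacent to the $\frac{m-1}{2}$ clique vertices $(w,0)$ with $w\in N_C(v)$. The correct count is therefore about $\frac{m-1}{2}$ clique vertices adjacent to $(u,0)$ but not to $(v,1)$, plus the $\frac{m-1}{2}$ stable neighbours $(w,1)$, $w\in N_C(u)$, of $(u,0)$. This gives $m-1$ after discarding the pair itself, so your bound of $m-2$ survives, but not for the reason you gave.

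Second, monotonicity gives $\tww(H)\leq\tww(G)$ for $H$ an induced subgraph of $G$. So for intermediate $n$ you must pad a smaller construction up to $n$ vertices rather than delete vertices from a larger one. For example, add isolated vertices to the stable side, or universal vertices in the cobipartite case. The construction is then an induced subgraph of the $n$-vertex graph and its lower bound transfers. The paper itself only treats the infinitely many orders coming from conference graphs, so this step is an extra you add, not something its argument requires.
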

\begin{proof}

    By Ahn, Hendrey, Kim, and Ohm \cite{Ahn_2022}, there exists an infinite class $\mathcal{C}$ of conference graphs of order $n$ with twin-width at least $\frac{n-1}{2}$.
    Let $G$ be a graph in $\mathcal{C}$. By definition, for any graph $G \in \mathcal{C}$, it holds that $|N(v)| = \frac{n-1}{2}$ for all $v \in G$. Also, for any pair of distinct vertices $u,v \in G$, it holds that $|N(u) \Delta N(v) \setminus \{u,v\}| = \frac{n-1}{2}$.

    Take $H$ to be the tensor product $G \times P_2$ of $G$ with the path on two vertices whose vertices are labelled $'$ and $''$. The tensor product defines $V(H) = \{v', v'' : v\in V(G)\}$ and $E(H) = \{u'v'', u''v' : uv \in E(G)\}$. Observe that the sets $\{v' \in G' | v \in G\}$ and $\{v'' \in G' | v \in G\}$ are stable and so this construction is bipartite.

    Note that it is sufficient to show that $H$ preserves the property that, for any two vertices $u,v \in H$, we have $|N(u) \Delta N(v)| \geq \frac{n-1}{2}$ as this ensures linear red degree from the first contraction.

    First, for any vertex $v \in H$, this construction maintains $|N(v)| = \frac{n-1}{2}$. For any two vertices $u',v'' \in H$ on opposite sides of the bipartition, $|N(u') \Delta N(v'')| = |N(u') \cup N(v'')| \geq 2\frac{n-1}{2}-2 \geq n-3$. For any two vertices $u',v' \in H$ on the same side of the bipartition, $|N(u') \Delta N(v')| = |\{w'' : w \in N(u) \Delta N(v)\}| \geq \frac{n-1}{2}$.

    Note that twin-width is self-complementary: a graph and its complement have the same twin-width \cite{tww1}. Thus, the lemma statement holds for cobipartite graphs by taking the complement of the construction above.

    To see that this also holds for split graphs, consider the case when one side of the bipartition is made into a clique by adding all possible edges. For any two vertices $u', v'$ on the same side of the bipartition, the symmetric difference of their neighbourhoods has not changed (except that it now includes $u', v'$ if $u', v'$ are on the clique side). For any two vertices $u',v'' \in H$ on opposite sides of the bipartition such that $u'$ belongs to the clique and $v''$ to the stable set, $|N(u') \Delta N(v'')| = |N(u') \setminus \{v''\}| \geq \frac{n-1}{2}-1$. 
\end{proof}

\end{appendices}
\end{document}